\documentclass[12pt]{amsart}
\usepackage{amsmath, amsfonts, amsthm, amssymb,txfonts}

\usepackage{graphicx}
\usepackage{color}

\usepackage{enumitem}

\numberwithin{equation}{section}
\numberwithin{figure}{section}
\theoremstyle{plain}
\newtheorem{thm}{\protect\theoremname}
  \theoremstyle{plain}
  \newtheorem{prop}[thm]{\protect\propositionname}
  \theoremstyle{plain}
  
  \theoremstyle{plain}
  \newtheorem{lem}[thm]{\protect\lemmaname}
  \theoremstyle{remark}
  \newtheorem{rem}[thm]{\protect\remarkname}
  \theoremstyle{definition}

  \providecommand{\corollaryname}{Corollary}
  \providecommand{\examplename}{Example}
  \providecommand{\lemmaname}{Lemma}
  \providecommand{\propositionname}{Proposition}
  \providecommand{\remarkname}{Remark}
  \providecommand{\theoremname}{Theorem}

  \newcommand{\E}{\mathcal{E}}
  \newcommand{\dom}{\operatorname{dom}}
  
  \newcommand{\DF}{\mathcal{E}}

\title[Maximum Principles for Schr\"odinger operators on fractals]{The
  Strong Maximum Principle for Schr\"{o}dinger operators on
  fractals} 
  \author{Marius~V. Ionescu}
  \address{Department of Mathematics,  United States Naval Academy, Annapolis,
  MD, 21402-5002, USA}
\email{ionescu@usna.edu}

\author{Kasso A.~Okoudjou}
\address{Department of Mathematics and Norbert Wiener Center\\
University of Maryland\\
College Park\\
MD 20742}
\email{kasso@math.umd.edu}

\author{ Luke~G. Rogers}
\address{Department of Mathematics, University of Connecticut, Storrs,
  CT 06269-1009, USA}
\email{rogers@math.uconn.edu}

\subjclass[2000]{Primary 35J15, 28A80; Secondary 35J25}

\date{\today}

\keywords{Analysis on fractals, Harnack's inequality, maximum principles Sierpi\'nski gasket, Schr\"odinger operators}

\begin{document}

\begin{abstract}
We prove a strong maximum principle for Schr\"odinger operators
defined on a class of fractal sets and their blowups without boundary.
Our primary interest is in weaker regularity conditions than have
previously appeared in the literature; in particular we permit both
the fractal Laplacian and the potential to be Radon measures on the
fractal. As a consequence of our results, we establish a Harnack
inequality for solutions of these operators.  
\end{abstract}

\maketitle

\section{Introduction}
\label{sec:introduction}
The goal of this note is to prove a strong maximum principle and related 
results for Schr\"odinger operators $L=\Delta -\nu$\/ where $\Delta$
is a fractal Laplacian (to be defined below) and the potential $\nu$
is a non-negative measure on the fractal.  When the fractal $K$ is the
standard Sierpinski gasket or its unbounded analogue, Strichartz
established strong maximum principles for solutions of the nonlinear
equation $\Delta u=F(x, u)$ where $F: K\times \mathbb{R} \to
\mathbb{R}$ is continuous and nonnegative in the sense that if
$u(x)\geq 0$ then $F(x, u(x))\geq 0$, see~\cite{Str:MP99}.  We consider
algebraically simpler operators because our interest is in weakening
the regularity conditions on both the Laplacian $\Delta$ and the
potential, both of which we will permit to be measures. 

In Section~\ref{sec:prelims} we recall some basic facts about analysis
on fractals and fractal blowups, details of which may be found
in~\cite{Kig_CUP01,Str_Prin06}.  Section~\ref{sec:mp} contains the
proof of the maximum principle and some comments on a Hopf-type lemma,
and Section~\ref{sec:harnack} has the proof of a Harnack inequality.

\section{Preliminaries}\label{sec:prelims}
We consider a self-similar set $K$ generated by an iterated function 
system $\{F_1,\dots,F_N\}$ consisting of contractive maps on a complete metric space.  
To a finite word
$\omega=\omega_1\omega_2\dotsm\omega_n\in\{1,\dotsc,N\}^n$ we
associate $F_\omega=F_{\omega_1}\circ\dotsm\circ F_{\omega_n}$ and a
cell $C_\omega=F_\omega(K)$.  We assume $K$ is post-critically finite,
whence there is a finite set $V_0$ so that for any words
$\omega,\omega'$ we have $C_\omega\cap C_{\omega'}\subset
F_\omega(V_0)\cap F_{\omega'}(V_0)$.  From the set $V_0$, we
inductively define $V_{n+1}=\cup_{i=1}^N F_i(V_n)$ and thereby obtain
a countable dense subset $V_*=\cup_n V_n$ of $K$.  The topology of $K$
is generated by cells in the sense that any $x\notin V_*$ has a
neighborhood base consisting of interiors of cells, while any $x\in
V_n$ has a neighborhood base in which each set is a finite union of
interiors of cells adjoined at $x$. We let $\mu$ denote the standard
self-similar measure on $K$. 

Following Kigami~\cite{Kig_CUP01} we make the strong assumption that
there is a resistance form $\E$, also called the energy, with domain
$\dom\E$  that is obtained from a regular self-similar harmonic
structure.  This means that there is an irreducible, non-negative,
symmetric, quadratic form $\E_0$ defined on the (finite-dimensional)
vector space of functions on $V_0$, and factors $0<r_i<1$ for each
$i=1,\dotsc, N$, such that setting $r_\omega=\prod_1^n r_{\omega_j}$
we have for $u\in\dom\E$ 
\begin{equation*}
	\E_n(u,u) := \sum_{\omega\in \{1,\dotsc,N\}^n} r_\omega^{-1} \E_0(u\circ F_\omega,u\circ F_\omega)
	\end{equation*}
and $\E(u,u)=\lim_n \E_n(u,u)$, where the latter sequence is
non-decreasing.  Those functions on which $\E_n$ is constant for
$n\geq m$ are called piecewise harmonic at scale $m$. It follows that
the pair $(\E,\dom\E)$ has the following properties: 1)  $\E$ is a
non-negative symmetric quadratic form on the linear space $\dom\E$, 2)
$\E$ vanishes exactly on the constants and $\dom\E$ modulo constants
is a Hilbert space under $\E$, 3) any function on a finite subset of
$K$ has an extension in $\dom\E$, 4) for any $p,q\in K$ the quantity  
\begin{equation}\label{eqn:defofR}
	R(p,q)=\sup\{|u(p)-u(q)|^2/\E(u,u): u\in\dom\E\text{ and } \E(u,u)>0\}
\end{equation}
 is finite, and 5) if $u\in\dom\E$ then so is
 $\bar{u}=\max\{0,\min\{u,1\}\}$ and $\E(\bar{u},\bar{u})\leq\E(u,u)$.
 Moreover, $R(p,q)$ is a metric on $V_*$ with completion homeomorphic
 to (and hence identified with) $K$, to which the continuous extension
 of $R$ is called the resistance metric; any function $u\in\dom\E$
 satisfies
 \begin{equation}\label{eqn:estres}
   |u(x)-u(y)|^2\leq \E(u,u)R(x,y).
 \end{equation}
It follows that any $u\in\dom\E$ is continuous with respect to the
 resistance metric; and $\E$ is a Dirchlet
 form on $L^2(K,\mu)$, see~\cite{Kig_93TAMS}.  From the general theory of Dirichlet forms (see, for example, \cite{FOT}) one
 then defines a non-positive definite, self-adjoint (Dirichlet)
 Laplacian operator $\Delta$ for which $u\in\dom\Delta$ and $\Delta
 u=f\in C(K)$ by requiring 
\[
\E(u,v)=-\int_K fv\,d\mu
\]
for all $v\in\dom{\E}$ such that $v|_{V_0}=0$. If we only assume that $f\in L^p(\mu)$, then we say that $u\in\dom_{L^p}\Delta$.  We will primarily work with an extension of the above definition in which
$\Delta u$ is a finite signed Radon measure:  we say that $u\in\dom_{\mathcal{M}}\Delta$ and $\Delta u=\sigma$ if
\[
  \E(u,v)=-\int_K v\,d\sigma
\]
for all $v\in\dom{\E}$ such that $v|_{V_0}=0$.   Of course, we can view any
function $f\in L^1(\mu)$ as the measure $f\,d\mu$. The Laplacian is
self-similar in the sense that
\[
  \Delta_{\mathcal{M}}(u\circ
  F_\omega)=r_\omega\mu_{\omega}(\Delta_{\mathcal{M}}u)\circ F_\omega,
  \]
  for all $\omega\in \{1,\dots,N\}^n$ and $n\ge 1$, where
   $\mu_i$ are the
  weights of the self-similar measure $\mu$ and $\mu_\omega=\prod_{j=1}^n \mu_{\omega_j}$ and $r_\omega$ is defined similarly.

Our main results are also valid on bounded subsets of the fractal blowups considered by Strichartz~\cite{Str_CJM98}.  An infinite blow-up without boundary points of the
p.c.f.\ fractal $K$ is defined using  a sequence $\alpha\in
\{1,\dots,N\}^\mathbb{N}$ such that $\alpha$ is \textbf{not}
eventually constant. For $n\ge 1$ set
$K_n=F_{\alpha_1}^{-1}F_{\alpha_2}^{-1}\dots
F_{\alpha_n}^{-1}(K)$. Then $\{K_n\}$ is an increasing sequence of
sets and the infinite blow-up is defined to be
$K_\infty=\bigcup_{n=1}^\infty K_n$. Both the energy
$\E$ and the measure $\mu$ extend to $K_\infty$ in the obvious fashion, and we write $\E_\infty$ and
$\mu_\infty$ for these extensions. The  Laplacian $\Delta_\infty$ is defined weakly as before. 

\section{Maximum principle}
\label{sec:mp}

Let  $\nu$ be a finite non-negative Radon measure.  The
Schr\"{o}dinger operator $L$ with potential $\nu$ is defined on $\dom_{\mathcal{M}}\Delta$ by
\begin{equation}\label{eqn:defL}
  Lu:= \Delta u- u \nu
\end{equation}
The main result of this section is
a maximum principle for the operator $L$. The proof goes via a result
on subharmonic functions that relies on an argument of
Kigami~\cite[Theorem~5.8(2)]{Kig_93TAMS}; it has been used in various
forms by other authors (for example, in
\cite[Theorem~2.1]{Str:MP99}, \cite[Lemma~4.3]{Rog_08}).  For the purposes of exposition we first prove a weak and then a strong maximum principle, though of course the former is a consequence of the latter.

\begin{prop}\label{lem:trivialmp}
Let  $u\in \dom_{\mathcal M}\Delta$.  If $C=C_\omega$ is a cell on which $\Delta u$ is a non-negative measure then $\max_C u\leq \max_{\partial C}u$. Moreover, if $u$ attains its maximum at an interior point of $C$ then $u$ is constant on $C$.
\end{prop}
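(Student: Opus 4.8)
The plan is to reduce to a statement about piecewise harmonic approximations and then exploit the self-similar structure of $\E$ together with property (2) above (that $\E$ vanishes exactly on the constants). First I would recall the classical harmonic case: if $h$ is harmonic on the cell $C=C_\omega$ (meaning $\Delta h=0$ there), then $h\circ F_\omega$ is harmonic on $K$, and the finite-dimensional linear algebra of the harmonic structure — specifically that the harmonic extension from $V_0$ minimizes energy and the Dirichlet form is irreducible — forces a maximum principle: $\max_K (h\circ F_\omega)=\max_{V_0}(h\circ F_\omega)$, with equality at an interior vertex only for constant $h$. This is exactly the ``argument of Kigami'' the authors cite, and I would import it as the base case.

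Next, for general $u\in\dom_{\mathcal M}\Delta$ with $\Delta u\geq 0$ on $C$, I would argue that $u$ is subharmonic on $C$ in the following sense: for each $m$, let $h_m$ be the piecewise harmonic function at scale $m$ (relative to the cell structure inside $C$) agreeing with $u$ on the scale-$m$ vertices in $C$. The key computation is that $u\leq h_m$ on $C$, which follows because on each scale-$m$ subcell $C'$ the difference $h_m-u$ is harmonic minus a function with $\Delta(h_m-u)=-\Delta u\leq 0$, i.e.\ $u-h_m$ is ``superharmonic'' with zero boundary data on $C'$, hence nonpositive by the weak comparison built from the base case and the self-similar scaling $\Delta_{\mathcal M}(u\circ F_{\omega})=r_\omega\mu_\omega(\Delta_{\mathcal M}u)\circ F_\omega$ (the positive factor $r_\omega\mu_\omega$ does not change the sign of the measure). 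Concretely one tests the weak equation $\E(u-h_m,v)=-\int v\,d(\Delta u)\leq 0$ for nonnegative $v$ vanishing on $\partial C'$ and chooses $v=(u-h_m)^+$, using property (5) (that truncations are in $\dom\E$ with no larger energy) to conclude $(u-h_m)^+\equiv 0$ on $C'$. Then $\max_C u\leq \max_C h_m\leq \max_{\partial C}h_m=\max_{\partial C}u$, the middle inequality being the maximum principle for piecewise harmonic functions (apply the base case on each subcell and note a maximum at an interior-to-$C$ vertex propagates to a neighbor, hence out to $\partial C$).

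For the strong (rigidity) part, suppose $u$ attains $\max_C u$ at an interior point $x$ of $C$. If $x\notin V_*$ then $x$ has a neighborhood base of cell interiors; pick a small subcell $C'\ni x$ with $C'\subset \operatorname{int}C$, and on $C'$ the function $\max_{C'}u - u$ is a nonnegative superharmonic function vanishing at $x$. Running the same $(u-h_m)^+$ argument on $C'$ and then the base-case rigidity (equality at an interior vertex forces a harmonic function to be constant) shows $u$ is constant on $C'$; a connectedness/propagation argument over the cell graph then spreads this to all of $C$, since the value $\max_C u$ is now attained on a relatively open and closed subset. If instead $x\in V_n$ for some $n$, one uses that its neighborhood base consists of finite unions of cell interiors adjoined at $x$ and runs the argument on each adjoining cell. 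The main obstacle is making the propagation rigorous at junction vertices: one must ensure that the maximum being attained at a vertex of one subcell really forces the neighboring subcell to also achieve its maximum at a shared vertex, which is where irreducibility of $\E_0$ (equivalently, strict positivity of the harmonic-extension kernel on $V_0$) is essential. I expect that step — rather than the approximation estimate $u\leq h_m$ — to require the most care.
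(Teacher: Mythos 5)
Your variational route to the first assertion is essentially sound, and in fact simpler than you make it: with $h_0$ the harmonic function on $C$ agreeing with $u$ on $\partial C$, testing the weak equation against $v=(u-h_0)^+$ (which vanishes on $\partial C$) and using $\E(w,w^+)\geq\E(w^+,w^+)$, valid for Dirichlet forms since $\E(w^+,w^-)\leq0$, gives $\E\bigl((u-h_0)^+,(u-h_0)^+\bigr)\leq0$, hence $u\leq h_0$ and $\max_C u\leq\max_{\partial C}u$. But the intermediate claim you actually invoke --- that a piecewise harmonic function at scale $m$ satisfies $\max_C h_m\leq\max_{\partial C}h_m$ because ``a maximum at an interior-to-$C$ vertex propagates to a neighbor'' --- is false for general piecewise harmonic functions: the tent function $h_p$, equal to $1$ at an interior vertex $p$ and $0$ at the other vertices of $V_m$, is piecewise harmonic and attains its maximum only at $p$. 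The propagation requires the discrete subharmonicity $\sum_{q\sim_m p}c_{pq}\bigl(u(p)-u(q)\bigr)\leq0$ at interior vertices, which is exactly what the paper extracts by testing the weak equation against these tent functions in~\eqref{eqn:trivialmp}; your comparison $u\leq h_m$ does not supply it. Taking $m=0$ makes the issue disappear, so this part is fixable.

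The genuine gap is in the rigidity statement. Your argument, run on a small cell $C'$ containing the maximum point $x$, yields that the harmonic interpolant $h$ of $u|_{\partial C'}$ is constant and hence that $u=u(x)$ on $\partial C'$; iterating over scales, and propagating through neighbors at vertices, only produces equality on a countable set of points. It does not show that $\{y:u(y)=u(x)\}$ contains a neighborhood of $x$, so the open--closed--connected argument you appeal to never gets started. (Trying to show $u\geq u(x)$ on the interior of $C'$ amounts to the strong minimum principle for the nonnegative superharmonic function $u(x)-u$ vanishing at $x$, i.e., the very statement being proved.) The missing ingredient is the paper's observation that equality in the subharmonic comparison forces $\Delta u$ to have \emph{no mass} on the interior of the cell: for $x\notin V_*$ this comes from the Green representation $u(y)=h(y)+\int g(y,z)(-\Delta u(dz))$ together with strict positivity of $g(x,\cdot)$ off $\partial C'$, and for $x\in V_*$ from $\int h_x\,\Delta u=0$ together with $h_x>0$ on the interiors of the adjoining cells. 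Once $\Delta u$ vanishes on the cell interior, $u$ is honestly harmonic there with constant boundary values, hence constant on the whole cell --- a full neighborhood of $x$. Without this step (or, alternatively, without the discrete inequality~\eqref{eqn:trivialmp} propagated through the connected graph on $V_n\cap C$ at \emph{every} scale $n$, followed by density of $V_*$ and continuity), your proof of the second assertion is incomplete.
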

\begin{proof}
Self-similarity of $\Delta$ ensures there is no loss of generality in taking $C=K$.
For any $n$ and a point $p\in V_n\setminus V_0$ there is a  piecewise
harmonic function at scale $n$ called $h_p$ that is harmonic on
$K\setminus V_n$ with $h_p(p)=1$ and $h_p(q)=0$ if $q\in V_n\setminus
\{p\}$.  The maximum principle for harmonic functions (\cite[Theorem 3.2.5]{Kig_CUP01}) implies
$0\leq h_p\leq 1$, and since $h_p|_{V_0}=0$ we have 
\begin{equation}\label{eqn:trivialmp}
  0=\E(h_p,u)+\int h_p\Delta u \geq
  \E(h_p,u)=\E_n(h_p,u)=\sum_{q\sim_n
    p}c_{pq}\bigl(u(p)-u(q)\bigr), 
\end{equation}
where the $c_{pq}>0$ are constants depending only on $\E_n$ and
$q\sim_n p$ means that $q$ and $p$ are neighbors in $V_n$.  Using
this inductively beginning at $n=1$ and considering each point in
$V_n$  we deduce that $u$ is bounded on $V_*$ by $\max_{V_0}u$, whence
the desired inequality follows by continuity of $u$. 

Now suppose $u$ attains an interior maximum at $x$.  We distinguish two cases according to whether $x\in V_*$ or $x\notin V_*$.

If $x\in V_*$ then it is in $V_n$ for some $n$. Let $q\sim_n x$ and
from~\eqref{eqn:trivialmp} and $u(x)\geq u(q)$ deduce both that
$u(q)=u(x)$ and that $\int h_x \Delta u=0$.  Since $h_x>0$ on the
interiors of the $n$-cells containing $x$ we conclude that $\Delta u$
has no mass on these cells and thus that $u$ is harmonic on them.  As
$u$ is harmonic and all its boundary values on these cells equal
$u(x)$ it is a constant function, and hence $u\equiv u(x)$ on a
neighborhood of $x$. 

If $x\notin V_*$ we fix an $n$ and the $n$-cell $C_n$ containing $x$.
Let $h$ be harmonic on $C_n$ with $h=u$ on the boundary $\partial
C_n$.  We use another result of Kigami~\cite[Proposition~3.5.5 and Theorem~3.5.7]{Kig_CUP01}, namely that
there is a non-negative Green kernel $g$ that inverts $-\Delta$ on
$C_n$ with Dirichlet boundary data. Thus, for $y\in C_n$, 
\begin{equation}\label{eqn:subharmoncell}
	u(y)=h(y)+\int g(y,z)(-\Delta u(dz))\leq h(y),
	\end{equation}
 which simply says $u$ is subharmonic.  However we then have
\begin{equation*}
	u(x)\leq h(x) \leq \max_{\partial C_n} h(y) =\max_{\partial C_n} u(y)\leq u(x)
	\end{equation*}
where the first inequality is~\eqref{eqn:subharmoncell}, the second is
the maximum principle for harmonic functions, the equality is $u=h$ on
$\partial C_n$, and the final inequality is that $u(x)$ is the maximum
of $u$.  Since equality must hold throughout we conclude $u=u(x)$ on
$\partial C_n$. However~\eqref{eqn:subharmoncell} must also be an
equality, and since $g(y,z)>0$ unless $z\in\partial C$ we find that
$\Delta u$ has no mass on the interior of $C$ whence $u$ is harmonic
and therefore constant.  Again we have found $u\equiv u(x)$ on a
neighborhood of $x$. 

We conclude by noting $\{y:u(y)=u(x)\}$ is closed by continuity of
$u$, open by the preceding reasoning, and contains $x$, so by
connectivity it is $K$. 
\end{proof}

The preceding argument extends readily to our class of  Schr\"odinger
operators. For $u$ a function on $K$ let  $u^{+}(x)=\max \{u(x),0\}$. 
  
\begin{thm}[Maximum Principle for Schr\"odinger operators]\label{thm:wmp}
Let $\nu$ be a non-negative Radon measure on $K_\infty$. Suppose $E\subset
K_\infty$ is open and bounded, and consider $u\in \dom_{\mathcal
  M}\Delta_\infty$.  If $Lu=\Delta u- \nu u$ is a non-negative measure 
on $E$  then $$\max_{x\in E}u(x)\le  \max_{p\in \partial E}u^{+}(p).$$
Moreover, if $u$ achieves a positive maximum at an interior point
$x\in E$ then $u$ is constant on the connected component of $E$
containing $x$. 
\end{thm}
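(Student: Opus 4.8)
The plan is to reduce the Schr\"odinger maximum principle to the subharmonic case already treated in Proposition~\ref{lem:trivialmp}. The key observation is that on the set where $u \geq 0$ the potential term works in our favor: since $\nu \geq 0$ and $u \geq 0$ there, we have $\Delta u = Lu + \nu u \geq 0$ as measures on $\{u > 0\} \cap E$, so $u$ is subharmonic on that region. First I would dispose of the trivial case: if $u \leq 0$ throughout $E$ then $\max_E u \leq 0 \leq \max_{\partial E} u^+$ and there is nothing to prove, so assume $M := \max_{\overline{E}} u > 0$ (the max exists since $E$ is bounded, hence $\overline{E}$ is compact in $K_\infty$, and $u$ is continuous).

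Next I would establish the inequality. Suppose for contradiction that $M > \max_{\partial E} u^+ \geq 0$, so the maximum is attained only at interior points of $E$, and the open set $U = \{x \in E : u(x) > \max_{\partial E} u^+\}$ is nonempty with $\overline{U} \cap \partial E = \emptyset$, i.e.\ $\overline{U}$ is a compact subset of the open set $E$. On $U$ we have $u > 0$, so $\Delta u \geq \nu u \geq 0$ as a measure on $U$. Now I would cover $\overline{U}$ by finitely many cells $C_\omega$ each contained in $E$ and small enough that $u > 0$ on each — this is possible because $\overline U$ is compact, $u$ is continuous and positive on a neighborhood of it, and cells form a neighborhood base. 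On each such cell $\Delta u$ is a non-negative measure, so Proposition~\ref{lem:trivialmp} gives $\max_{C_\omega} u \leq \max_{\partial C_\omega} u$. The standard propagation argument then applies: a point where $u$ attains the value $M$ lies in some cell of the cover, and by the cell-wise maximum principle the value $M$ is also attained on that cell's boundary vertices; chaining through adjacent cells of the cover one eventually reaches a point of $\partial U$, where by definition $u = \max_{\partial E} u^+ < M$, a contradiction. (Alternatively, and more cleanly, one applies the second, "constancy" conclusion below directly.)

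For the constancy statement, suppose $u$ attains a positive maximum $M > 0$ at an interior point $x \in E$; let $\Omega$ be the connected component of $E$ containing $x$. Consider $A = \{y \in \Omega : u(y) = M\}$. It is nonempty and relatively closed in $\Omega$ by continuity of $u$. To see it is open, fix $y \in A$: since $u(y) = M > 0$ and $u$ is continuous, $u > 0$ on a neighborhood of $y$, and since $\Omega$ is open and cells form a neighborhood base we may choose a cell $C_\omega \subset \Omega$ with $y$ an interior point of $C_\omega$ and $u > 0$ on $C_\omega$. Then $\Delta u \geq \nu u \geq 0$ on $C_\omega$, so $\Delta u$ is a non-negative measure there, and $u$ attains its maximum over $C_\omega$ at the interior point $y$; the "moreover" clause of Proposition~\ref{lem:trivialmp} forces $u \equiv M$ on $C_\omega$, hence on a neighborhood of $y$. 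Thus $A$ is open, closed, and nonempty in the connected set $\Omega$, so $A = \Omega$, and $u \equiv M$ on $\Omega$. This also re-proves the inequality: if the maximum over $\overline E$ exceeded $\max_{\partial E} u^+$ it would be attained at an interior point, forcing $u$ constant on a component $\Omega$ whose closure meets $\partial E$ (since $E$ is bounded), giving $u = M > \max_{\partial E} u^+$ at a boundary point — a contradiction.

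The main obstacle is purely a matter of care rather than depth: one must verify that near any point where $u$ is positive one can find a genuine cell $C_\omega$ (not merely a finite union of cells, as occurs at points of $V_*$) that is contained in $E$ (or in $\Omega$) and on which $u$ stays positive, so that the cell-wise Proposition applies verbatim. At a point $p \in V_n$ the neighborhood base consists of finite unions of cells adjoined at $p$; one simply applies Proposition~\ref{lem:trivialmp} to each of those cells separately, using that $p$ is an interior point of the union and a boundary vertex of each cell, and concludes $u \equiv u(p)$ on each. The self-similarity of $\Delta$ and the fact that the constructions of $V_0$, cells, the Green kernel, and the harmonic functions $h_p$ all transfer to $K_\infty$ ensure Proposition~\ref{lem:trivialmp} is available on every cell of $K_\infty$, so no new analytic input beyond the bounded-blowup setup is needed.
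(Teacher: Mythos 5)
Your overall strategy is the paper's: on $U=\{x\in E: u(x)>0\}$ the hypothesis gives $\Delta u=Lu+u\nu\ge0$ as measures, so the problem reduces to the subharmonic case of Proposition~\ref{lem:trivialmp}, and the constancy statement follows from an open--closed argument in the connected component. Your treatment of points $y\notin V_*$, where $y$ is interior to a single cell contained in $U$, is correct, and deriving the boundary inequality from the constancy statement (rather than from the somewhat hand-waved cell-chaining in your second paragraph) is the right move.

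There is, however, a genuine gap at points of $V_*$, precisely the case you relegate to your final paragraph. If $p\in V_n$ and $u(p)=M$ is the maximum, then $p$ is a \emph{boundary} vertex of each of the cells adjoined at $p$, so neither clause of Proposition~\ref{lem:trivialmp}, applied to one such cell $C$, gives anything: the first clause yields $\max_C u\le\max_{\partial C}u$, which is vacuous here since $p\in\partial C$ already realizes the maximum, and the ``moreover'' clause is unavailable because it requires the maximum to occur at an \emph{interior} point of $C$. (A subharmonic function can perfectly well attain its maximum over a cell at a boundary vertex without being constant.) What is actually needed is the discrete computation from the \emph{proof} of the Proposition, not its statement: with $h_p$ the scale-$n$ piecewise harmonic spike at $p$, and using that $\Delta u\ge0$ on the whole union of cells meeting $p$ (where $h_p$ is supported), one gets $0\ge \E(h_p,u)=\sum_{q\sim_n p}c_{pq}\bigl(u(p)-u(q)\bigr)$ with every summand nonnegative; hence $u(q)=u(p)$ for each neighbor and $\int h_p\,\Delta u=0$, so $\Delta u$ has no mass on the interiors of these cells, $u$ is harmonic there with all boundary values equal to $u(p)$, and only then is $u$ constant on the union. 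This is exactly the step the paper invokes by citing ``the reasoning in the proof of Proposition~\ref{lem:trivialmp}''; your write-up needs the same invocation and cannot get away with citing the Proposition's statement cell by cell. Once that is repaired the remainder of your argument is sound and coincides with the paper's.
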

\begin{proof}
Observe that the asserted inequality is trivial if $u\leq0$.
Accordingly we may assume  $U:=\{x\in E\,:\, u(x)>0\}\ne \emptyset$.
Then $\Delta u$ is a non-negative measure on $U$, so
Proposition~\ref{lem:trivialmp} is applicable to each cell contained
in $U$.  Moreover the proof of Proposition~\ref{lem:trivialmp}  
implies there cannot be a strict maximum at $p\in V_*$ if there is a
scale $n$ such that all neighbors $q\sim_n p$ are in $U$. Thus the
maximum of $u$ cannot only occur at an interior point to $U$ because
every such point has a neighborhood in $U$ that is a cell or finite
union of cells at a single scale.  Since $u=0$ at any point
$p\in\partial U$ that is interior to $E$, the maximum must be achieved
on $\partial U\cap \partial E$, which implies the stated inequality. 

If a positive maximum occurs at an interior point  $x$ of $E$ it must
also be that $x\in U$, which is open.  If $x\not\in V_*$ it is
contained in the interior of a cell contained in $U$ and from
Proposition~\ref{lem:trivialmp} $u$ is constant on this cell.  If
$x\in V_*$ then there is neighborhood of $x$ in $U$ consisting of $x$
and the interiors of some cells meeting at $x$ and lying in $U$, but
in this case the reasoning in the proof of
Proposition~\ref{lem:trivialmp} implies $u$ is constant on these
cells.  In summary, $u=u(x)$ on a neighborhood of $x$.  However this
implies the set $Y=\{y:u(y)=u(x)\}$ is open in $E$ because every such
$y$ is necessarily in the open set $U$, and $Y$ is obviously closed
because $u$ is continuous, so $u$ is constant on the connected
component of $E$ containing $x$. 
\end{proof}


In the classical setting of a Euclidean space, one standard approach
to obtaining the strong maximum principle from the weak maximum
principle is to use the Hopf lemma.  It is perhaps interesting to note
that in the fractal setting we can prove a Hopf-type lemma at points
in $V_*$ but have no corresponding results at points of $K\setminus
V_*$ and therefore cannot use this approach to obtain a strong maximum
principle. 

To state our Hopf-type lemma we recall that the normal
derivative~\cite[Definition 3.7.6]{Kig_CUP01} of a function at a point
$p\in V_0$ may be written using the scale~$n$ piecewise harmonic
function $h_p^{(n)}$ which is $1$ at $p$ and zero on $V_n\setminus\{p\}$
as  
\begin{equation}\label{eqn:defd_n}
	\partial_n u(p)= \lim_{n\to\infty} \E(h_p^{(n)},u)
\end{equation}
There is no analogue of the normal derivative at points in $K\setminus
V_*$, thus no Hopf lemma at these points. 

\begin{lem}
Let $\nu$ be a non-negative Radon measure on $K$ and suppose
$u\in\dom_{\mathcal{M}} \Delta$ satisfies $Lu=\Delta u -u\nu$ is also
non-negative measure. If there is $p\in V_0$ such that $u(p)>u(x)$ for
all $x\in K\setminus \{p\}$ and also $u(p)>0$ then $\partial_nu(p)>0$. 
\end{lem}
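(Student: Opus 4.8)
The plan is to localize to a cell on which $u$ is positive --- hence subharmonic there --- and then to peel off a harmonic function that carries the strict boundary maximum.

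First I would use continuity of $u$ together with $u(p)>0$ to fix a scale $m$ with $u>0$ on the (unique, since $p\in V_0$) $m$-cell $C_m$ containing $p$. Because $h_p^{(n)}$ vanishes off the $n$-cell at $p$, the quantity $\E(h_p^{(n)},u)=\E_n(h_p^{(n)},u)$ involves only the values of $u$ on $C_m$ once $n\ge m$; replacing $K$ by $C_m$ (which merely rescales $\partial_n u(p)$ by a positive factor) I may therefore assume $u>0$ on all of $K$, with $u(p)>u(x)$ for $x\ne p$, $p\in V_0$, and --- since $u>0$ forces $u\nu\ge0$ --- that $\Delta u$ is a non-negative measure on $K\setminus V_0$.

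Next I would invoke the non-negative Green kernel $g$ of $-\Delta$ on $K$ with Dirichlet data on $V_0$ (the operator already used in the proof of Proposition~\ref{lem:trivialmp}), which satisfies $g(y,z)=0$ whenever $y\in V_0$ or $z\in V_0$. Setting $w(y)=-\int_K g(y,z)\,\Delta u(dz)$ and $h=u-w$, one gets $h$ harmonic with $h|_{V_0}=u|_{V_0}$; and since $g(y,\cdot)$ vanishes on $V_0$, only the interior part of $\Delta u$, which is $\ge0$, contributes, so $w\le0$ on $K$ and $w|_{V_0}=0$. Each approximant $\E(h_p^{(n)},v)=\E_n(h_p^{(n)},v)=\sum_{q\sim_n p}c^{(n)}_{pq}\bigl(v(p)-v(q)\bigr)$ is linear in $v$, so $\partial_n u(p)=\partial_n h(p)+\partial_n w(p)$; and since $w(p)=0\ge w(q)$, every summand for $w$ equals $c^{(n)}_{pq}(-w(q))\ge0$, whence $\partial_n w(p)\ge0$.

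It then remains to show $\partial_n h(p)>0$ for the harmonic function $h$, and here the fractal setting is actually more transparent than the Euclidean one: as $h$ is harmonic, $\E(h_p^{(n)},h)$ is unchanged if $h_p^{(n)}$ is replaced by the harmonic function $\psi_p$ with $\psi_p|_{V_0}=\delta_p$ (the two agree on $V_0$), and for harmonic functions $\E$ collapses to $\E_0$ of the boundary data; hence $\partial_n h(p)=\E_0(\delta_p,u|_{V_0})=\sum_{q\in V_0\setminus\{p\}}c_{pq}\bigl(u(p)-u(q)\bigr)$, which is strictly positive since the graph underlying $\E_0$ is connected (so some $c_{pq}>0$) and $u(p)>u(q)$ for every $q\ne p$. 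Combining gives $\partial_n u(p)\ge\partial_n h(p)>0$. The only point requiring care --- and it is exactly where the weak regularity assumed in this paper enters --- is that $\Delta u$ and $\nu$ are merely measures: this is why one must first pass to a cell on which $u>0$ before treating $u$ as subharmonic, and why the argument leans on the Green kernel vanishing on $V_0$, so that possible atoms of $\Delta u$ on $V_0$ are harmless; everything else is routine manipulation of the definition~\eqref{eqn:defd_n} of the normal derivative.
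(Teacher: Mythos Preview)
Your argument is correct, but it takes a genuinely different route from the paper. The paper never invokes the Green kernel or decomposes $u$: instead it fixes $n$ large enough that $u>0$ on the $n$-cell at $p$, observes that for $m>n$ the difference $h_p^{(n)}-h_p^{(m)}$ is nonnegative and supported where $\Delta u\ge0$, and thereby shows directly that the approximants $\E(h_p^{(m)},u)$ are nondecreasing in $m$ and bounded below by $\E_n(h_p^{(n)},u)=\sum_{q\sim_n p}c_{pq}(u(p)-u(q))>0$. Your approach instead peels off the harmonic part $h$ via the Green representation, computes $\partial_n h(p)=\E_0(\delta_p,u|_{V_0})>0$ exactly, and bounds the remainder termwise. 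What the paper's argument buys is that it is slightly more elementary---it uses only the comparison of tent functions, not the Green kernel---and it incidentally yields monotonicity of the approximating sequence. What your argument buys is a cleaner conceptual split (strict positivity lives entirely in the harmonic boundary data, the subharmonic correction only helps), and it recycles the Green-kernel machinery already set up in Proposition~\ref{lem:trivialmp}. One small remark: your localization ``rescales $\partial_n u(p)$ by a positive factor'' is slightly imprecise---for $n\ge m$ the function $h_p^{(n)}$ is supported in $C_m$, so $\E(h_p^{(n)},u)$ is literally unchanged by restricting attention to $C_m$ with the inherited energy; a rescaling only enters if you pull back by $F_\omega$ to a fresh copy of $K$---but either way the sign is preserved, so this does not affect correctness.
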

\begin{proof}
If $n<m$ the difference $k^{(n,m)}=h_p^{(n)}-h_p^{(m)}$ is zero at $p$
and the points $q\sim_n p$, and is equal to $h_p^{(n)}>0$ at each
$q\sim_m p$; it is otherwise harmonic, so the minimum principle for
harmonic functions implies it is non-negative.  Now $u(p)>0$ and $u$
is continuous so there is $n$ such that $u$ is positive on the support
of $k^{(n,m)}$.  For this $n$ and any $m>n$ we see $\Delta u\geq u\nu$
is a non-negative measure on the support of $k^{(n,m)}$ and therefore 
\begin{equation*}
	\E(h_p^{(n)}-h_p^{(m)},u) = - \int \bigl(h_p^{(n)}-h_p^{(m)}\bigr)\Delta u \leq 0
\end{equation*}
which gives $\partial_nu(p)=\lim_{m\to\infty}\E(h_p^{(m)},u)\geq
\E(h_p^{(n)},u)=\sum_{q\sim_n p} c_{pq} (u(p)-u(q))$ for some values
$c_{pq}>0$ depending on $\E_n$.  The fact that $u(p)>u(q)$ for all $q$
concludes the proof. 
\end{proof}

\section{Harnack Inequality}\label{sec:harnack}

In the classical setting the strong maximum principle for $Lu\geq0$ implies a Harnack inequality for solutions of $Lu=0$, see~\cite[Section~8.8]{GTbook}. We show that this is the case in our
setting.

Before stating the Harnack inequality we note that the equation $Lu=0$ for $L$ as in~\eqref{eqn:defL} has solutions for sufficiently small measures $\nu$ on $K$ because with boundary data a harmonic function $h$ the operator defined using the continuous non-negative Green kernel $g(x,y)$ by
\begin{equation*}
 u\mapsto h + \int g(x,y) u(y) \nu(dy)
\end{equation*}
is contracting in the uniform norm provided $\int g(x,y)\nu(dy)<1$.  Moreover, on the cell $F_\omega(K)$ the Green kernel is $r_\omega g(F_\omega^{-1}(x),F_\omega^{-1}(y))$ where $r_\omega=\prod_{j=1}^{|\omega|} r_{\omega_j}$ by~\cite[Proposition~3.5.5]{Kig_CUP01}.  It follows for any $\nu$ that we can take $|\omega|$ sufficiently large  so that contractivity of the analogous operator on $F_\omega(K)$ is valid, ensuring local solutions exist everywhere.

\begin{thm}[Harnack Inequality]\label{thm:harnack}
On $K$, fix a non-negative finite Radon measure $\nu$.  Suppose
$u\in\dom_{\mathcal{M}}(\Delta)$ satisfies both $u\geq0$ and  $Lu=0$,
where $L$ is as in~\eqref{eqn:defL} and the latter is an equality of
measures.  If $E$ is a compact subset of $K\setminus V_0$ then there
is $C$ that depends only on $E$ such that $\max_E u\leq C \min_E u$. 
\end{thm}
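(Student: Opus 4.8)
The plan is to derive the Harnack inequality from the strong maximum principle by a standard chaining argument, but adapted to the cell structure of the fractal. First I would establish a \emph{local} Harnack estimate: for each point $x\in K\setminus V_0$ there is a neighborhood $U_x$ (taken to be the interior of a cell, or a finite union of cells at one scale if $x\in V_*$) and a constant $c_x>1$ such that every non-negative solution of $Lu=0$ on a slightly larger neighborhood satisfies $\max_{\overline{U_x'}} u \le c_x \min_{\overline{U_x'}} u$ on a compactly contained $U_x'$. To prove this local estimate I would use the Green-kernel representation recalled just before the theorem: on a cell $C$, writing $h$ for the harmonic function with the boundary values of $u$, we have $u(y) = h(y) + \int_C g_C(y,z)\, u(z)\,\nu(dz)$, where $g_C$ is the (scaled) continuous non-negative Green kernel. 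Since $u\ge0$ and $h\ge0$ (by the maximum principle $h$ inherits non-negativity from $u|_{\partial C}$), the Harnack inequality for harmonic functions on the cell (a finite-dimensional, purely combinatorial statement following from positivity of harmonic measure / the matrix $\E_0$) gives $c_1^{-1}\max_{\partial C} u \le h(y)\le \max_{\partial C} u$ on any compact subset of the interior, and the integral term is controlled between $0$ and $\bigl(\sup_{y\in K'} \int g_C(y,z)\nu(dz)\bigr)\max_C u$; combining these on a small enough cell where $\int g_C\,d\nu$ is small (which the remark before the theorem guarantees for cells $F_\omega(K)$ with $|\omega|$ large) yields the two-sided local bound.

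Next I would pass from local to global by compactness and connectivity. Cover $E$ by finitely many of the neighborhoods $U_{x_i}'$, $i=1,\dots,M$; since $K\setminus V_0$ is connected (the fractal is connected and removing the finite boundary set $V_0$ from a cell-structured space leaves it connected, as the interiors of adjacent cells meet along shared vertices in $V_*$) and $E$ is compact, any two points of $E$ can be joined by a chain of at most $M$ overlapping neighborhoods from the cover, on each of which the local Harnack estimate applies. Multiplying the local constants along the chain gives $\max_E u \le C\min_E u$ with $C = \bigl(\max_i c_{x_i}\bigr)^M$, depending only on $E$ (through the cover, hence through the geometry of $E$ and $\nu$), not on $u$. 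One must be slightly careful that the overlapping neighborhoods genuinely share interior points so that the same value $u(y)$ can be used to splice two local estimates; arranging the cover so that each $U_{x_i}'$ overlaps the next in an open set handles this.

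The main obstacle I expect is the local Harnack estimate at points of $V_*$, and more precisely controlling the behavior near the junction points where several cells meet. At such a point $p\in V_n$ the natural neighborhood is a union of $n$-cells adjoined at $p$, and a non-negative solution could a priori be small at $p$ while large deep inside one of the cells; ruling this out requires combining the cell-wise Green representations with the matching condition on normal derivatives at $p$ (the Kirchhoff/gluing condition built into $\dom\Delta$), together with the harmonic Harnack inequality which must be applied on the union of cells rather than a single cell. An alternative that sidesteps some of this is to only use neighborhoods that are \emph{single} cell interiors for $x\notin V_*$ and, for $x\in V_*$, to chain through a point in the interior of an adjacent cell rather than estimating at $p$ directly — i.e. never demand a two-sided bound that includes a junction point in its interior, only in the overlap region. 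Since $E$ is compact in $K\setminus V_0$ but may still contain points of $V_*$, one cannot entirely avoid junctions in $E$, but one can always find, for each $p\in E\cap V_*$, an adjacent cell interior on which to run the estimate and then invoke continuity of $u$ to transfer the bound to $p$ itself; making this bookkeeping clean is the part requiring the most care.
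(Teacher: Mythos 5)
Your strategy --- a local two-sided estimate from the Green-kernel representation plus the Harnack inequality for harmonic functions on small cells, followed by a chaining argument --- is genuinely different from the paper's proof. The paper argues softly: Proposition~\ref{prop:unifcpt} shows that the set of solutions with $0\le u\le 1$ is compact in the uniform norm (the energy bound $\E(u,u)\le \nu(K)+2$ together with \eqref{eqn:estres} gives equicontinuity, Arzel\`a--Ascoli gives precompactness, and passing to the limit in the weak formulation gives closedness); normalizing by $\max_{V_0}u=1$ one extracts an extremal $\tilde u$ achieving $c=\inf_{\mathcal A}\min_E u$, and $c>0$ follows from the strong minimum principle. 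That route avoids all chaining and all junction-point analysis at the price of a completely non-constructive constant; yours would yield a quantitative constant, and the single-cell part of your local estimate does work essentially as you describe. (One correction: by \eqref{eqn:subharmoncell} the representation is $u=h-\int g(\cdot,z)u(z)\,\nu(dz)$, not $+$; the displayed map in the remark before the theorem has a sign slip. So $u\le h\le\max_{\partial C}u$ comes for free, and the smallness of $\sup_y\int g_C(y,z)\,\nu(dz)$ relative to the harmonic-measure lower bound $\delta$ is what you need for the \emph{lower} estimate $u\ge(\delta-\epsilon)\max_{\partial C}u$ on a compact subset of $C^{\mathrm{int}}$.)

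The junction-point difficulty you flag is the genuine gap, and neither of your proposed workarounds closes it. Two adjacent cells meet only in the finite set $F_\omega(V_0)\cap F_{\omega'}(V_0)$, so consecutive links of any Harnack chain can only communicate through junction points; and the plan to ``run the estimate on an adjacent cell interior and transfer to $p$ by continuity'' fails because a compact subset of $C^{\mathrm{int}}$ never has $p\in\partial C$ in its closure, while the pointwise lower bound $u(y)\ge(\delta(y)-\epsilon)\max_{\partial C}u$ degenerates as $y\to p$ (the harmonic-measure weights of the vertices other than $p$ tend to $0$ there). Cell-wise Green representations alone cannot bound $u(p)$ from below. What does close the gap is the discrete identity from the proof of Proposition~\ref{lem:trivialmp}: for $p\in V_n\setminus V_0$ and $S$ the union of $n$-cells meeting at $p$, \eqref{eqn:trivialmp} gives $\sum_{q\sim_n p}c_{pq}\bigl(u(p)-u(q)\bigr)=-\int h_p\,u\,d\nu\ge -\nu(S)\max_S u$, while subharmonicity of $u$ gives $\max_S u\le\max\bigl(u(p),\max_{q\sim_n p}u(q)\bigr)$; since the conductances $c_{pq}$ grow like $r_\omega^{-1}\to\infty$ as $n\to\infty$ while $\nu(S)\le\nu(K)$ is fixed, choosing $n$ large yields $u(p)\ge c'\max_{q\sim_n p}u(q)$ with $c'>0$ depending only on $n$, $\E_n$ and $\nu(K)$. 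With that lemma in hand your chaining argument goes through (note also that $E$ need not be connected, so the chain must be allowed to pass through all of $K$, which is harmless since $Lu=0$ holds globally).
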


The proof makes use of the following result, which may be of independent interest. 
\begin{prop}\label{prop:unifcpt}
For $\nu$ a non-negative finite Radon measure  on $K$ let
\begin{equation*}
\mathcal{A}=\{u\in\dom_{\mathcal{M}}(\Delta): Lu=0 \text{ as measures and } 0\leq u\leq1\}.
\end{equation*}
Then $\mathcal{A}$ is compact in the uniform norm.
\end{prop}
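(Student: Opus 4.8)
The plan is to apply the Arzelà--Ascoli theorem, so I need to show that $\mathcal{A}$ is uniformly bounded, equicontinuous, and closed in $C(K)$. Uniform boundedness is immediate since every $u\in\mathcal{A}$ satisfies $0\le u\le 1$. For equicontinuity, the key observation is that every $u\in\mathcal{A}$ satisfies $\Delta u = u\nu$ as measures, so the representation using the Green kernel on $K$ with Dirichlet data applies: writing $h_u$ for the harmonic function with the same boundary values as $u$ on $V_0$, we have $u(x) = h_u(x) + \int_K g(x,z)\, u(z)\,\nu(dz)$. The harmonic part $h_u$ lies in a finite-dimensional space and is bounded (values in $[0,1]$ on $V_0$), hence $h_u$ ranges over a compact, in particular equicontinuous, family. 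For the integral term, I would use~\eqref{eqn:estres}: since $z\mapsto g(x,z)$ and $x\mapsto g(x,z)$ are continuous and $g$ is jointly continuous on the compact set $K\times K$, and since $0\le u\le 1$ while $\nu$ is finite, the function $x\mapsto \int g(x,z)u(z)\,\nu(dz)$ is controlled: $\bigl|\int g(x,z)u(z)\nu(dz) - \int g(y,z)u(z)\nu(dz)\bigr|\le \nu(K)\cdot\sup_z|g(x,z)-g(y,z)|$, and joint continuity of $g$ on the compact square gives a modulus of continuity independent of $u$. Thus $\mathcal{A}$ is equicontinuous.

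It remains to check that $\mathcal{A}$ is closed under uniform convergence. Suppose $u_k\in\mathcal{A}$ and $u_k\to u$ uniformly; clearly $0\le u\le 1$. I need $u\in\dom_{\mathcal M}\Delta$ with $\Delta u = u\nu$. Using the Green representation again, $u_k = h_{u_k} + \int g(\cdot,z)u_k(z)\,\nu(dz)$; the boundary values $u_k|_{V_0}\to u|_{V_0}$ so $h_{u_k}\to h_u$ uniformly (finite-dimensional), and by dominated convergence $\int g(\cdot,z)u_k(z)\nu(dz)\to\int g(\cdot,z)u(z)\nu(dz)$ pointwise, hence $u = h_u + \int g(\cdot,z)u(z)\,\nu(dz)$. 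Since $u$ is continuous and $u\nu$ is a finite Radon measure, the Green kernel identity $g$ inverting $-\Delta$ with Dirichlet data shows $u\in\dom_{\mathcal M}\Delta$ with $-\Delta u = -u\nu$ on the interior, i.e. $Lu = \Delta u - u\nu = 0$ as measures; one should check the boundary contribution vanishes, but this is built into the Green representation since $h_u$ is exactly harmonic. Therefore $u\in\mathcal{A}$.

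The main obstacle I anticipate is making the equicontinuity estimate fully rigorous, specifically ensuring the modulus of continuity of the map $x\mapsto\int g(x,z)u(z)\,\nu(dz)$ is genuinely uniform over $\mathcal A$. This hinges on joint (uniform) continuity of $g$ on $K\times K$, together with the fact that $g(x,x)$ may be unbounded as $x$ approaches $\partial C$ in general Green-kernel settings---however, Kigami's Green kernel on a p.c.f.\ fractal is continuous and bounded on all of $K\times K$ (\cite[Proposition~3.5.5]{Kig_CUP01}), which resolves this. A secondary subtlety is that $\nu$ has no atoms-exclusion assumption, so one must be careful that the dominated convergence argument in the closedness step uses only $0\le u_k\le 1$ and finiteness of $\nu$, both of which hold. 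Given these, the Arzelà--Ascoli argument closes the proof.
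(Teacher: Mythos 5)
Your argument is correct, but it follows a genuinely different route from the paper's. You run everything through the Green representation $u = h_u - \int_K g(\cdot,z)u(z)\,\nu(dz)$ (note the sign: since $\Delta u = u\nu$ and $g$ inverts $-\Delta$, the integral term enters with a minus sign as in~\eqref{eqn:subharmoncell}; this is cosmetic and does not affect your estimates). Equicontinuity then comes from uniform continuity of $g$ on the compact $K\times K$ plus $0\le u\le 1$ and $\nu(K)<\infty$, and closedness from passing to the limit in the integral equation. The paper instead gets equicontinuity intrinsically from the resistance form: $\E(u-h,u-h)=-\int(u-h)u\,d\nu$ gives $|\E(u,u)|\lesssim \nu(K)$, and \eqref{eqn:estres} converts this energy bound into a uniform modulus of continuity; closedness is proved by showing a uniformly convergent sequence in $\mathcal{A}$ is Cauchy in the energy norm and passing to the limit in the weak formulation $\E(u,v)=-\int uv\,d\nu$. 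Your approach buys a more concrete, potential-theoretic picture, but it leans on two inputs that deserve explicit justification in this generality: (i) joint continuity and boundedness of $g$ on $K\times K$ (true by Kigami, as you note, but a stronger hypothesis than the paper uses), and (ii) the converse direction of the Green representation for measures, namely that $v=\int g(\cdot,z)\,\sigma(dz)$ for a finite Radon measure $\sigma$ lies in $\dom_{\mathcal M}\Delta$ with $\Delta v=-\sigma$; this requires checking $v\in\dom\E$ and interchanging $\E(\cdot,w)$ with integration against $\sigma$, which is standard (using the reproducing property $\E(g(\cdot,z),w)=w(z)$ and $\E(v,v)=\iint g\,d\sigma\,d\sigma\le\|g\|_\infty\sigma(K)^2$) but should be stated. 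The paper's energy route avoids both points and is closer to the minimal resistance-form hypotheses the authors are working under.
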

\begin{proof}
For any $u\in\mathcal{A}$  with $u|_{V_0}=0$ we have
\begin{equation*}
	\DF(u,u)=-\int u\Delta u=-\int u\,d\sigma=-\int u^2d\nu
	\end{equation*}
so that $|u|\leq1$ implies $|\E(u,u)|\leq \nu(K)=M<\infty$.
Applying the estimate \eqref{eqn:estres} we have
\begin{equation}\label{eqn:easybound}
	|u(x)-u(y)|^2\leq\DF(u,u)R(x,y)\leq  M  R(x,y).
\end{equation}

If instead, $u|_{V_0}\neq0$, then choose a harmonic function $h$ so $(u-h)|_{V_0}=0$ and since $|h|\leq1$ on $V_0$ we have $|h(x)-h(y)|^2\leq 2R(x,y)$, so~\eqref{eqn:easybound} holds with $M$ replaced by $M+2$.  However an estimate of the type~\eqref{eqn:easybound} implies $\mathcal{A}$ is equicontinuous, and since the definition of $\mathcal{A}$ implies it is equibounded, an application of the Arzela-Ascoli theorem then yields that it is precompact in the uniform norm.

Now suppose $\{u_n\}\subset\mathcal{A}$ and $u_n\to u$ uniformly.  If $v\in \dom(\DF)$  and $v|{V_0}=0$ we know $v$ is continuous and therefore bounded so Jensen's inequality provides
\begin{equation*}
	\bigl|\DF(u_n-u_m,v)\bigr| = \Bigl|\int v\Delta(u_n-u_m)\Bigr|
	\leq M \|v\|_\infty \|u_n-u_m\|_\infty ,
	\end{equation*}
and therefore $u_n$ is Cauchy in $\dom(\DF)$.  However $\dom(\DF)$ is a Hilbert space under the norm $\DF+\|\cdot\|_\infty$, so we conclude that $u\in\dom(\DF)$ and $\DF(u,v)=-\int v u d\nu$, whence $\Delta u=u\nu$.  This implies $u\in\mathcal{A}$, so $\mathcal{A}$ is closed, and in light of our Arzela-Ascoli argument, compact in the uniform norm.
\end{proof}

\begin{proof}[Proof of Theorem~\protect{\ref{thm:harnack}}]
Let
\begin{equation*}
	\mathcal{A}=\{u\in\dom(\Delta): Lu=0,\ 0\leq u\leq 1\text{ and } \max_{V_0}u=1\}.
	\end{equation*}
Evidently this is a closed subset of the space of functions considered in Proposition~\ref{prop:unifcpt} and is therefore compact in the uniform norm.  Hence the quantity $c=\inf_\mathcal{A} \min_{x\in E}u(x)$ is achieved by some $\tilde{u}\in\mathcal{A}$.  Either $c=1$ or $\tilde{u}$ is non-constant and the hypothesis $\tilde{u}\geq0$ implies $c>0$ by the strong minimum principle.

Now take $u$ as in the statement of the theorem.  It is continuous and the strong maximum principle is applicable, so it achieves its maximum on $V_0$.  Accordingly, $u/(\max_{V_0} u)\in\mathcal{A}$ and has minimum at least $c$ on $E$.  The result follows with $C=1/c$.
\end{proof}

\begin{rem}
The above immediately implies the corresponding result in the setting of a bounded subset of $K_\infty$ because such a set is contained in a (sufficiently large) copy of $K$ and we can transfer the theorem directly to this setting using the self-similarity of the energy.
\end{rem}

\section*{Acknowledgments.}   K. A. O.\ was partially supported by a
grant from the Simons Foundation $\# 319197$,  the U. S.\ Army
Research Office  grant  W911NF1610008,  the National Science
Foundation grant DMS 1814253, and an MLK  visiting
professorship. L. G. R. was partially supported by the National
Science Foundation grant DMS 1659643.


\begin{thebibliography}{1}

\bibitem{FOT}
Masatoshi Fukushima, Yoichi Oshima, and Masayoshi Takeda, \emph{Dirichlet forms
  and symmetric {M}arkov processes}, extended ed., de Gruyter Studies in
  Mathematics, vol.~19, Walter de Gruyter \& Co., Berlin, 2011.

\bibitem{GTbook}
David Gilbarg and Neil~S. Trudinger, \emph{Elliptic partial differential
  equations of second order}, Classics in Mathematics, Springer-Verlag, Berlin,
  2001, Reprint of the 1998 edition.

\bibitem{Kig_93TAMS}
Jun Kigami, \emph{Harmonic calculus on p.c.f. self-similar sets}, Trans. Amer.
  Math. Soc. \textbf{335} (1993), no.~2, 721--755.
  
\bibitem{Kig_CUP01}
\bysame, \emph{Analysis on fractals}, Cambridge Tracts in Mathematics, vol.
  143, Cambridge University Press, Cambridge, 2001. 

\bibitem{Rog_08}
Luke~G. Rogers, \emph{Estimates for the resolvent kernel of the {L}aplacian on
  p.c.f. self-similar fractals and blowups}, Trans. Amer. Math. Soc.
  \textbf{364} (2012), no.~3, 1633--1685.

\bibitem{Str_CJM98}
Robert~S. Strichartz, \emph{Fractals in the large}, Canad. J. Math. \textbf{50}
  (1998), no.~3, 638--657.

\bibitem{Str:MP99}
\bysame, \emph{Some properties of {L}aplacians on fractals}, J. Funct. Anal.
  \textbf{164} (1999), no.~2, 181--208.

\bibitem{Str_Prin06}
\bysame, \emph{Differential equations on fractals}, Princeton University Press,
  Princeton, NJ, 2006, A tutorial.
\end{thebibliography}
\end{document}